\renewcommand{\to}{\longrightarrow}
\newcommand{\dcup}{\mathinner{\cup \mkern -8.7mu \rlap{\raise 0.6ex\hbox{.}}\mkern 8.7mu}}
\newcommand{\ddbigcup}{\mathinner{\bigcup \mkern -15.3mu \rlap{\raise 0.7ex\hbox{.}}\mkern 14.9mu}}
\newcommand{\ov}{\overline}
\newcommand{\inv}{^{-1}}
\newcommand{\p}{\varphi}
\newcommand{\Thmname}{Theorem}
\newcommand{\Propname}{Proposition}
\newcommand{\Lemmaname}{Lemma}
\newcommand{\Definitionname}{Definition}
\newtheorem{Thm}{\Thmname}
\newtheorem{Prop}[Thm]{\Propname}
\newtheorem{Lemma}[Thm]{\Lemmaname}
{\theoremstyle{definition}
}
{\theoremstyle{remark}
}
\theoremstyle{remark}
\newtheorem*{Claim}{Claim}}
\title{An elementary proof that subgroups of free groups are free}
\author{Benjamin Steinberg}
\address{School of Mathematics and Statistics \\ Carleton University \\
1125 Colonel By Drive\\
Ottawa, Ontario  K1S 5B6 \\
Canada}
\thanks{The author gratefully acknowledge the support of NSERC}
\email{bsteinbg@math.carleton.ca}
\date{\today}
\begin{document}
\begin{abstract}
We provide an elementary proof that subgroups of free groups are free via group actions.
\end{abstract}

\maketitle

\section{Introduction}
A group $F$ is \emph{free} on a set $X$ if there is a function $\iota\colon X\to F$ such that given any mapping $\rho\colon X\to G$ with $G$ a group, there is a unique homomorphism $\p\colon F\to G$ making the diagram
\begin{equation}\label{commutediag}
\xymatrix{X\ar[rr]^{\iota}\ar[rd]_{\rho}&& F\ar@{-->}[ld]^{\p}\\ & G &}
\end{equation}
commute. One can easily prove that $\iota$ is injective and the image of $\iota$ generates $F$.  One can then view elements of $F$ as words over the alphabet $X\cup X\inv$.  One proves that each word represents the same element of $F$ as a unique reduced word, called its reduced form;  a word is \emph{reduced} if it has no factor of the form $xx\inv$ with $x\in X\cup X\inv$. When convenient one identifies $F$ with the set of reduced words over $X\cup X\inv$. See~\cite{LyndonandSchupp} for details.

The classical Nielsen-Schreier theorem says that every subgroup of a free group is free.  The original proofs were combinatorial in nature, and therefore not very appealing.  Short conceptual proofs appeared later based on covering spaces of graphs and the Seifert-van Kampen theorem. However, this approach is beyond the scope of a first graduate algebra course.
The purpose of this note is to give an elementary, yet conceptual, proof that subgroups of free groups are free.
The idea is similar to an approach of the author and Ribes using wreath products~\cite{RibesSteinberg}, but we simplify things here by pursuing the avenue of group actions instead.  If $X$ is a nice topological space, then the category of covering spaces of $X$ is equivalent to the category of $\pi_1(X)$-sets, so it is clear that the topological proof should correspond to a group actions proof.  We assume nothing about free groups beyond what is in the previous paragraph.

\section{Group actions}
In this paper we shall work with right actions of groups.  The symmetric group on a set $A$ is written $S_A$.  The identity of a group is denoted $1$.

\subsection{Tensor products}
If $G$ is a group, then by a \emph{$G$-set}, we shall always mean a right $G$-set.  Let $H$ be a subgroup of $G$ and $A$ an $H$-set.  Then $H$ acts on the right of $A\times G$ by $(a,g)h = (ah,h\inv g)$.  The set of orbits of this action is denoted $A\otimes_H G$, as it is the natural notion of a tensor product in this context, cf.~\cite{MM-Sheaves}. The orbit of $(a,g)$ is denoted $a\otimes g$.  Observe that $A\otimes_H G$ is a right $G$-set via the action $(a\otimes g)g' = a\otimes gg'$.

Let $T$ be a transversal to the set of right cosets $G/H$ (i.e., a set of coset representatives) and denote by $\ov g$ the element of $T$ representing the coset $Hg$ for $g\in G$; we shall always assume that $1\in T$. Notice that the map $A\times G/H\to A\otimes_H G$ given by $(a,Hg)\mapsto a\otimes \ov g$ is a bijection.  Indeed, $a\otimes g = a\otimes g(\ov g)\inv \ov g=ag(\ov g)\inv\otimes \ov g$ and so the map $a\otimes g\mapsto (ag(\ov g)\inv,Hg)$ (which is easily checked to be well defined) is the inverse bijection.  We often identify $A\otimes_H G$ with $A\times G/H$ via this bijection.  The action of $G$ transfers via the bijection as:
\begin{equation}\label{actionincoords}
(a,Hg)g' = (a\ov gg'(\ov{gg'})\inv,Hgg').
\end{equation}
Notice that if $h\in H$, then $(a,H)h=(ah,H)$ using that $\ov 1=1=\ov h$.  Thus $A\times \{H\}$ is an $H$-invariant subset, isomorphic to $A$ as an $H$-set via the projection to the first coordinate.  From now on we do not distinguish these isomorphic $H$-sets.

%\begin{Rmk}
%The reader familiar with wreath products will observe that \eqref{actionincoords} is a wreath %product action.
%\end{Rmk}

\subsection{A characterization of free groups}
It is well known that two groups $G$ and $H$ are isomorphic if and only if the category of $G$-sets is equivalent to the category of $H$-sets.  Thus the following group action characterization of free groups should come as no surprise.

\begin{Prop}\label{charfree}
Let $X$ be a set and $F$ a group equipped with a map $\iota\colon X\to F$.  Then $F$ is a free group on $X$ (with respect to the mapping $\iota$) if and only if given any set $A$ and any map $\sigma\colon X\to S_A$, there is a unique action of $F$ on $A$ such that \[a\iota(x)=\sigma(x)(a)\] for all $x\in X$ and $a\in A$.
\end{Prop}
\begin{proof}
Clearly if $F$ is free on $X$, then it has the property described in the proposition.  For the converse, let $\rho\colon X\to G$ be a mapping with $G$ a group.  We need to construct a unique homomorphism $\p\colon F\to G$ such that the diagram \eqref{commutediag}
commutes.  Define $\sigma\colon X\to S_G$ by $\sigma(x)(g)=g\rho(x)$.   Then there is a unique action of $F$ on $G$ such that $g\cdot \iota(x)=g\rho(x)$ for all $g\in G$, $x\in X$ by hypothesis, where we use $\cdot$ to distinguish the action from multiplication in $G$. We claim that $h(g\cdot w) = (hg)\cdot w$ for all $h,g\in G$ and $w\in F$.  Indeed, fix $h\in G$.  It is immediately verified that the formula $g\odot w = h\inv[(hg)\cdot w]$ provides an action of $F$ on $G$.   Moreover, \[g\odot \iota(x) = h\inv[(hg)\cdot \iota(x)]=h\inv[hg\rho(x)] = g\rho(x).\]  Uniqueness now implies that $g\odot w=g\cdot w$ for all $w\in F$.  In other words, we have $h\inv[(hg)\cdot w] = g\cdot w$, or equivalently $(hg)\cdot w=h(g\cdot w)$,  for all $h,g\in G$ and $w\in F$.

Define $\p\colon F\to G$ by $\p(w)=1\cdot w$.  Then, for $x\in X$, one has $\p\iota(x)=1\cdot \iota(x)=1\rho(x)=\rho(x)$ by construction.  Furthermore, by the claim \[\p(v)\p(w) = \p(v)(1\cdot w) = (\p(v)1)\cdot w=(1\cdot v)\cdot w=1\cdot (vw)=\p(vw)\] and hence $\p$ is a homomorphism such that \eqref{commutediag} commutes.  It remains to verify that $\p$ is unique.  Suppose that $\tau\colon F\to G$ is another such homomorphism.  Define an action $\ast$ of $F$ on $G$ by $g\ast w= g\tau(w)$.  Then $g\ast \iota(x)=g\tau\iota(x)=g\rho(x)$ and so $\ast$ coincides with $\cdot$ by uniqueness.  Thus $\tau(w) = 1\ast w=1\cdot w=\p(w)$, as required.
\end{proof}

\section{The Nielsen-Schreier Theorem}
We now present an elementary proof that subgroups of free groups are free via group actions. Let $F$ be a free group on $X$ and $H$ a subgroup.

\subsection{Schreier Transversals}
A \emph{Schreier transversal} for $H\leq F$ is a transversal $T$ of $H$ in $F$ such that if we view $T$ as a set of reduced words, then $T$ is closed under taking prefixes (and hence in particular contains the empty word).  The existence of Schreier transversals is a straightforward application of Zorn's Lemma.  We include a proof for completeness.

\begin{Lemma}
There exists a Schreier transversal $T$ of $H$ in $F$.
\end{Lemma}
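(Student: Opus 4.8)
The plan is to realize $T$ as a maximal element of an appropriate poset and then show that maximality forces it to be a transversal. Let $\mathcal{P}$ be the collection of all prefix-closed sets $S$ of reduced words such that distinct elements of $S$ lie in distinct right cosets of $H$ (equivalently, the assignment $w\mapsto Hw$ is injective on $S$), ordered by inclusion. First I would observe that $\mathcal P$ is nonempty, containing at least the one-element set $\{1\}$, and that the union of any chain in $\mathcal P$ again belongs to $\mathcal P$: prefix-closure passes to unions immediately, and if two distinct words of the union shared a coset they would already appear together inside a single member of the chain, violating that member's defining property. Zorn's Lemma then produces a maximal element $T\in\mathcal P$, and the whole content of the lemma reduces to showing that this $T$ meets every right coset.

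Before the main step I would record that $1\in T$. Otherwise no element of $T$ lies in the coset $H$, so adjoining $1$ preserves coset-injectivity, while prefix-closure is preserved because the empty word has no proper prefixes; this would contradict maximality.

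For the main step I would argue by contradiction: assuming some coset is unrepresented, choose a reduced word $w$ of minimal length among all reduced words lying in an unrepresented coset. Since $1\in T$ represents $H$, the word $w$ is nonempty, so $w=w'a$ with $a\in X\cup X\inv$ and $w'$ its shorter reduced prefix. Minimality of $|w|$ forces $Hw'$ to be represented, say $Hw'=Ht$ with $t\in T$, whence $Hw=Hw'a=Hta$. The decisive case distinction is whether the concatenation $ta$ is reduced. If $t$ ends in $a\inv$, then the reduced form of $ta$ is the prefix of $t$ obtained by deleting its last letter, which lies in $T$ by prefix-closure; this exhibits a representative of $Hw$, contradicting that $Hw$ is unrepresented. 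If instead $ta$ is reduced, then all of its proper prefixes are prefixes of $t$ and hence lie in $T$, so $T\cup\{ta\}$ is still prefix-closed and, since $Hta=Hw$ was unrepresented, still coset-injective; thus $T\cup\{ta\}$ properly extends $T$ inside $\mathcal P$, again contradicting maximality.

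I expect the only real obstacle to be this final step, and specifically the bookkeeping around the cancellation case $t=t'a\inv$: one must see that appending a letter to a transversal element either lands in a genuinely new coset with all prefixes already present, or collapses back onto an existing prefix. The minimal-length choice of $w$ together with prefix-closure is exactly what forces these to be the only two possibilities and drives both to a contradiction.
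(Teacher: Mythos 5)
Your proof is correct and takes essentially the same approach as the paper's: the identical Zorn's Lemma poset of prefix-closed, coset-injective sets of reduced words, followed by the same minimal-length counterexample argument with the case split on whether $ta$ is reduced. The only cosmetic difference is your explicit verification that $1\in T$ (which the paper leaves implicit, since a nonempty prefix-closed set automatically contains the empty word).
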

\begin{proof}
Consider the collection $\mathscr P$ of all prefix-closed sets of reduced words over $X\cup X^{-1}$ that intersect each right coset of $H$ in at most one element and order $\mathscr P$ by inclusion.  Then $\{1\}\in \mathscr P$, so it is non-empty.  It is also clear that the union of a chain of elements from $\mathscr P$ is again in $\mathscr P$, so $\mathscr P$ has a maximal element $T$ by Zorn's Lemma.  We need to show that each right coset of $H$ has a representative in $T$.
Suppose this is not the case and let $w$ be a minimum length word such that $Hw\cap T=\emptyset$.  Since $1\in T$, it follows $w\neq 1$ and hence $w=ux$ in reduced form where $x\in X\cup X\inv$.  By assumption on $w$, we have $Hu=Ht$ for some $t\in T$ (and so $Htx=Hw$).  If $tx$ is reduced as written, then $T\uplus \{tx\}\in \mathscr P$, contradicting the maximality of $T$.  If $tx$ is not reduced as written, then $tx$, or rather its reduced form, belongs to $T$ (by closure of $T$ under prefixes) and $Hw = Htx$. This contradicts the choice of $w$, completing the proof that $T$ is a transversal.
\end{proof}

\subsection{The Nielsen-Schreier Theorem} We now proceed to prove that subgroups of free groups using the tensor product construction.

\begin{Thm}[Nielsen-Schreier]
Subgroups of free groups are free.  More precisely, let $F$ be a free group on $X$ and let $H$ be a subgroup. Fix a Schreier transversal $T$ for $H$ and put
\begin{equation*}\label{Schreierbasis}
B=  \{tx(\ov {tx})\inv \mid (t,x)\in T\times X,\ tx(\ov{tx})\inv\neq 1\}.
\end{equation*}
 Then $H$ is freely generated by $B$.
\end{Thm}
\begin{proof}
By Proposition~\ref{charfree}, it suffices to show that given a map $\sigma\colon B\to S_A$, there is a unique action of $H$ on $A$ such that $ab=\sigma(b)(a)$.    For convenience, we extend $\sigma$ to $B\cup\{1\}$ by mapping $1$ to the identity of $S_A$.

First we prove uniqueness, as this will motivate the definition.  So assume we have such an action and consider the tensor product $A\otimes_H F$.  As usual we identify $A\otimes_H F$ with $A\times F/H$ where the action is given by $(a,Hv)w = (a\ov v w(\ov{vw})\inv,Hvw)$.  Our original action is the restriction of the action of $H$ on $A\times F/H$ to the subset $A\times \{H\}$ (under the usual identifications) and hence is uniquely determined by the action of $F$ on $A\otimes_H F$, which in turn is uniquely determined by the action of the generators $X$ of $F$.  But if $x\in X$, then
\begin{equation*}\label{defineaction}
(a,Hw)x = (a\ov wx(\ov {wx})\inv,Hwx)=(\sigma(\ov wx(\ov {wx})\inv)(a),Hwx)
\end{equation*}
(since $\ov wx(\ov {wx})\inv\in B\cup \{1\}$) and hence is uniquely determined by $\sigma$.

Let us now take $(a,Hw)x= (\sigma(\ov wx(\ov {wx})\inv)(a),Hwx)$ as the definition of an action of $F$ on $A\times F/H$; note that the action of $F$ in the second coordinate is the usual action of $F$ on $F/H$ and so $A\times \{H\}$ is invariant under $H$.    We must show that $(a,H)b=(\sigma(b)(a),H)$ for $b\in B$.

\begin{Claim}
Suppose that $t\in T$.  Then for all $a\in A$, one has $(a,H)t = (a,Ht)$.
\end{Claim}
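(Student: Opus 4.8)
The plan is to prove the Claim by induction on the length of $t$ as a reduced word, using crucially that a Schreier transversal is closed under prefixes. The base case $t=1$ is immediate: $(a,H)1=(a,H)=(a,H1)$ using $\ov 1=1$. For the inductive step I write $t$ in reduced form as $t=sy$, where $y\in X\cup X\inv$ is the last letter and $s$ is the prefix of length $|t|-1$. Since $T$ is prefix-closed, $s\in T$, so the induction hypothesis gives $(a,H)s=(a,Hs)$. It then remains to show that $(a,Hs)y=(a,Hsy)=(a,Ht)$; that is, acting by the final letter $y$ should change only the coset coordinate and leave the $A$-coordinate fixed.

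When $y=x\in X$ this falls straight out of the defining formula for the action. Indeed $(a,Hs)x=(\sigma(\ov sx(\ov{sx})\inv)(a),Hsx)$, and since both $s$ and $sx=t$ lie in $T$ we have $\ov s=s$ and $\ov{sx}=sx$, so the argument of $\sigma$ collapses to $sx(sx)\inv=1$. As $\sigma(1)$ is the identity of $S_A$, the $A$-coordinate is unchanged and $(a,Hs)x=(a,Ht)$, as needed. The point is that travelling along an edge of the transversal always contributes the trivial generator.

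The one place that needs care is when the final letter is an inverse generator $y=x\inv$, since the action was only prescribed on $X$ itself and the formula does not apply directly. Here I would invoke that the $\cdot$ we constructed is a genuine action of $F$ (by Proposition~\ref{charfree}), so the permutation induced by $x\inv$ is the inverse of the permutation induced by $x$; hence it suffices to verify that applying $x$ to the candidate value $(a,Ht)$ returns $(a,Hs)$. Computing with $w=sx\inv$ gives $(a,Hsx\inv)x=(\sigma(\ov{sx\inv}x(\ov{sx\inv x})\inv)(a),Hs)$, and since $t=sx\inv\in T$ forces $\ov{sx\inv}=sx\inv$ while $s\in T$ forces $\ov{sx\inv x}=\ov s=s$, the argument of $\sigma$ is again $(sx\inv)x\,s\inv=1$. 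Thus $(a,Hsx\inv)x=(a,Hs)$, and applying $x\inv$ yields $(a,Hs)x\inv=(a,Hsx\inv)=(a,Ht)$, completing the induction. The main obstacle throughout is not any deep computation but rather organizing the two cases and remembering to use that $\cdot$ is an action precisely where the defining formula is silent.
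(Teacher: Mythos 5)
Your proof is correct and follows essentially the same route as the paper: induction on the length of $t$, using prefix-closure to get $s\in T$, with the same two cases and the same observation that in the $x\inv$ case one reduces to a forward computation because $x\inv$ acts as the inverse permutation of $x$. The only cosmetic difference is that you make the appeal to ``$\cdot$ is an action'' explicit where the paper leaves it implicit.
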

\begin{proof}[Proof of claim]
We prove the claim by induction on the length of $t$ (as a reduced word).  If $t$ is empty, then trivially the claim holds.  Suppose first that $t=ux$ with $x\in X$ as a reduced word.  By definition of a Schreier transversal, we have $u\in T$ and so by induction $(a,H)ux=(a,Hu)x=(\sigma(ux(\ov {ux})\inv)(a),Hux)$.  But $ux=t=\ov{ux}$ and so the right hand side is $(a,Hux)$ as required.

Next suppose that $t=ux\inv$ with $x\in X$ (as a reduced word).   By definition of a Schreier transversal, $u\in T$ and so $\ov{tx}=u=tx$.  We need to verify that $(a,Ht)=(a,H)t$, or equivalently, that $(a,Ht)x=(a,H)u$.  But $(a,H)u=(a,Hu)$ by induction.  On the other hand, $(a,Ht)x= (\sigma(tx(\ov{tx})\inv)(a),Htx)=(a,Hu)$ establishing the claim.
\end{proof}

To complete the proof, we must show that if $t\in T$ and $x\in X$ with $tx(\ov{tx})\inv\neq 1$, then $(a,H)tx(\ov{tx})\inv = (\sigma(tx(\ov{tx})\inv)(a),H)$; or equivalently, we must show that
\begin{equation}\label{thecomputation}
(a,H)tx=(\sigma(tx(\ov{tx})\inv)(a),H)\ov{tx}.
\end{equation}
By the claim, the right hand side of \eqref{thecomputation} is $(\sigma(tx(\ov{tx})\inv)(a),Htx)$, whereas the left hand side is $(a,Ht)x = (\sigma(tx(\ov{tx})\inv)(a),Htx)$. This completes the proof that $H$ is free on the set $B$.
\end{proof}

It is an easy combinatorial exercise to verify that the elements of $B$ are distinct (the above proof does not provide this) and to count that if the size of $X$ is $n$ and $[F:H]=m$, then $B$ has $1+m(n-1)$ elements (this is Schreier's formula).  For the last statement one just observes that $T\times X$ has $mn$ elements and that, for each non-empty word $t\in T$, there is exactly one element $x\in X$ so that $tx(\ov{tx})\inv =1$.

\end{document}